\documentclass[11pt, a4paper]{article}

\usepackage[utf8]{inputenc}
\usepackage[T1]{fontenc}
\usepackage{amsmath, amssymb, amsthm}
\usepackage{geometry}
\usepackage{caption}
\usepackage{tikz} 
\usepackage[hidelinks]{hyperref}
\newtheorem{theorem}{Theorem}

\newtheorem{proposition}{Proposition}

\begin{document}
\title{An Algebraic Approach to the Arithmetic of Square Tilings}
\author{
    Paul Perrier\thanks{Email: \href{mailto:paul.perrier@polytechnique.org}{\textbf{paul.perrier@polytechnique.org}}} \\
    \small École Polytechnique, 91128 Palaiseau Cedex, France
}

\date{Mars 2026} 

\maketitle

\begin{abstract}
It is a classical result dating back to Dehn (1903) that squares tiling a rectangle must have rational side lengths. However, bounding the arithmetic complexity of these tilings, specifically the growth of their denominators, is an active area of research. Recent contributions have tackled these arithmetic constraints through the complexity of planar multigraphs and Diophantine approximations. In this paper, we offer an algebraic perspective of this problem by encoding the tiling's geometry into a block matrix. This approach allows us to reduce geometric constraints to a finite set of additive relations. By exploiting the total unimodularity of interval matrices (Ghouila-Houri, 1962), we provide a self-contained proof that the least common multiple of the denominators is bounded by $2^n$ in two dimensions, where $n$ is the order of the tiling. Furthermore, because we can naturally generalizes this matrix to represent $d$-dimensional hypercube tilings, yielding a bound of $d^n$. We demonstrate the utility of this framework by providing a brief proof of Richard Kenyon's 1996 theorem regarding the minimum number of squares required to tile an integer rectangle.

\vspace{0.5cm}
\noindent \textbf{Keywords:} Square tiling, Squaring the rectangle, Denominator bounds, Rectangle dissection, Incidence matrix, Discrete geometry.
\end{abstract}

\section*{Introduction}

The problem of tiling a rectangle with squares, commonly referred to as "squaring the rectangle", occupies a unique place in the history of combinatorial geometry. As early as 1903, Max Dehn \cite{Dehn1903} established a fundamental qualitative result: a rectangle can be tiled by squares only if its sides are commensurable. In 1940, Brooks, Smith, Stone, and Tutte \cite{BSST1940} revolutionized the study of these tilings by establishing an analogy with planar electrical networks, where currents correspond to side lengths and voltages to segment positions.

Finding the maximum size of the squares' common denominators is a problem that has seen several recent developments. For instance, Fomin (2021) \cite{Fomin2021} bounded this arithmetic scaling by leveraging multigraphs, achieving a tight upper bound for the two-dimensional case. More recently, Keleti et al. (2023) \cite{Liu2023} employed Diophantine approximations to study this problem, an approach that also allowed them to generalize denominator bounds to higher dimensional hypercube tilings.

This work proposes a different, linear-algebraic perspective. By constructing an "extended grid" from the tiling's segments, we show that the geometry of the tiling can be entirely encapsulated within an incidence matrix $M$. This approach transforms geometric constraints into algebraic properties of the null space of $M$, offering a direct view of the problem's arithmetic structure. While our resulting bounds do not asymptotically improve upon the most recent results, this matrix formulation offers a conceptually simple, self-contained framework. Whereas approaches relying on electrical frameworks yield results solely for two dimensional tilings, this approach can be easily generalized to higher dimensions. 

The paper is organized as follows. In Section 1, we introduce the extended grid construction and recall Dehn's rationality proof (Theorem \ref{thm:rationality}) through the modern formalism of $\mathbb{Q}$-linear maps \cite{AignerZiegler}. We establish a foundational result (Proposition \ref{prop:segments_bound}) bounding the number of grid segments by $n+1$, demonstrated via geometric mapping and later via algebraic rank arguments.

In Section 2, we formalize the construction of the incidence matrix $M$. We prove the central result (Theorem \ref{thm:kernel_dim}) that $\dim \ker M = 1$, ensuring that the matrix characterizes the tiling independently from scaling. This formulation also yields a combinatorial upper bound of $O(c^n n^{3n+2})$ on the number of tilings of order $n$ (Theorem \ref{thm:combinatorial_bound}).

Section 3 constitutes the arithmetic core of the paper. By exploiting the interval matrix structure of the incidence blocks \cite{GhouilaHouri1962}, we show that the subdeterminants of the matrix are linked to totally unimodular matrices. From this, we derive our main result (Theorem \ref{thm:den_bound}): the least common multiple of the denominators $D$ satisfies $D \leq 2^n$.

In Section 4, we demonstrate the efficacy of this approach by recovering Richard Kenyon's theorem \cite{Kenyon1996}, through a concise proof using our bound on $D$, proving that a rectangle of dimensions $p \times q$ requires at least $\max(q/p, \log_2 q)$ squares to tile it. 

Finally, in Section 5, we detail how this interval matrix structure naturally extends to $d$-dimensional hypercuboids, yielding a generalized denominator bound of $d^n$.

\section{The Extended Grid and Rationality Constraints}

Let us consider a rectangle tiled by a finite number of squares. By applying a suitable homothety, we can assume without loss of generality that the dimensions of the rectangle are $a \times 1$, where $a \in \mathbb{R}_+^*$. We denote by $x_1, \dots, x_n$ the side lengths of the $n$ squares composing the tiling. Throughout this paper, the \textit{order} of a tiling refers to the number $n \in \mathbb{N}^*$ of squares it contains. From now on, we fix the order $n$ of the tilings we consider.

To ensure that our future algebraic representation is unique and free from index permutation symmetries, we impose a canonical ordering on these $n$ squares. We label them $x_1, \dots, x_n$ by sorting their bottom-left corners lexicographically: first by their abscissa (from left to right), and then by their ordinate (from bottom to top). This canonical labeling is illustrated in Figure 1.

Before proving the main theorems, we introduce a geometric construction that will be fundamental throughout this paper. 

\vspace{0.5cm}
\noindent
\begin{minipage}[c]{0.55\textwidth}
    Given a tiling of an $a \times 1$ rectangle by $n$ squares, we extend all the line segments that form the boundaries of the squares to the edges of the rectangle. This partitions the rectangle into a finer grid of smaller rectangles, which we will call the \textbf{extended grid}.

    Let $y_1, \dots, y_k$ be the lengths of the horizontal segments appearing in this extended grid, ordered from left to right. Similarly, let $z_1, \dots, z_l$ be the lengths of the vertical segments, ordered from bottom to top. 
    
    Since each square has its right side extended at most once, and its left side is either the right side of another square or the boundary of the rectangle, there can be at most $n$ vertical lines. Therefore, $k \leq n$. By a symmetric argument, $l \leq n$. 
    
    For each square $x_i$ ($1 \leq i \leq n$), we define $Y_i$ as the set of all $y_j$ that intersect the square $x_i$, and $Z_i$ as the set of all $z_j$ that intersect it. For example, in Figure 1, we have $Y_3 = \{y_1, y_2\}$ and $Z_3 = \{z_3, z_4\}$.
\end{minipage}%
\hfill%
\begin{minipage}[c]{0.4\textwidth}
    \centering
    \resizebox{\textwidth}{!}{
    \begin{tikzpicture}[scale=1.2]
        \draw[dashed, gray, thick] (1,0) -- (1,5);
        \draw[dashed, gray, thick] (3,0) -- (3,5);
        \draw[dashed, gray, thick] (4,0) -- (4,5);
        
        \draw[dashed, gray, thick] (0,1) -- (5,1);
        \draw[dashed, gray, thick] (0,2) -- (5,2);
        \draw[dashed, gray, thick] (0,3) -- (5,3);

        \draw[ultra thick] (0,2) rectangle (3,5) node[midway] {$x_3$};
        \draw[ultra thick] (3,3) rectangle (5,5) node[midway] {$x_7$};
        \draw[ultra thick] (3,2) rectangle (4,3) node[midway] {$x_6$};
        \draw[ultra thick] (4,2) rectangle (5,3) node[midway] {$x_8$};
        \draw[ultra thick] (3,0) rectangle (5,2) node[midway] {$x_5$};
        \draw[ultra thick] (0,1) rectangle (1,2) node[midway] {$x_2$};
        \draw[ultra thick] (0,0) rectangle (1,1) node[midway] {$x_1$};
        \draw[ultra thick] (1,0) rectangle (3,2) node[midway] {$x_4$};

        \draw[ultra thick] (0,0) rectangle (5,5);

        \foreach \x in {0,1,3,4,5} {
            \draw[dotted, thick] (\x,0) -- (\x,-0.8);
        }
        \draw[<->, thick] (0.02,-0.6) -- (0.98,-0.6) node[midway, below] {$y_1$};
        \draw[<->, thick] (1.02,-0.6) -- (2.98,-0.6) node[midway, below] {$y_2$};
        \draw[<->, thick] (3.02,-0.6) -- (3.98,-0.6) node[midway, below] {$y_3$};
        \draw[<->, thick] (4.02,-0.6) -- (4.98,-0.6) node[midway, below] {$y_4$};

        \foreach \y in {0,1,2,3,5} {
            \draw[dotted, thick] (0,\y) -- (-0.8,\y);
        }
        \draw[<->, thick] (-0.6,0.02) -- (-0.6,0.98) node[midway, left] {$z_1$};
        \draw[<->, thick] (-0.6,1.02) -- (-0.6,1.98) node[midway, left] {$z_2$};
        \draw[<->, thick] (-0.6,2.02) -- (-0.6,2.98) node[midway, left] {$z_3$};
        \draw[<->, thick] (-0.6,3.02) -- (-0.6,4.98) node[midway, left] {$z_4$};
    \end{tikzpicture}
    }
    \captionof{figure}{Extended grid for a tiling of order $n=8$ of a square ($a=1$).}
\end{minipage}
\vspace{0.5cm}

We now recall the following foundational theorem, initially demonstrated by Max Dehn in 1903 \cite{Dehn1903}. The elegant algebraic proof presented here is adapted from Aigner and Ziegler's \textit{Proofs from THE BOOK} \cite{AignerZiegler}, slightly expanded to explicitly demonstrate the rationality of the constituent squares' dimensions alongside the rectangle's proportions.

\begin{theorem} \label{thm:rationality}
    For any square tiling of an $a \times 1$ rectangle, we have $a \in \mathbb{Q}$ and $x_i \in \mathbb{Q}$ for all $1 \leq i \leq n$.
\end{theorem}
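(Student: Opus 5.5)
Working in the extended grid, I will follow Dehn's $\mathbb{Q}$-linear argument from Aigner and Ziegler. First I show $a\in\mathbb{Q}$ by contradiction. Then I prove rationality of the $x_i$ by applying the same argument with a different linear functional.

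\textbf{Setup.} Let $V$ be the $\mathbb{Q}$-vector subspace of $\mathbb{R}$ spanned by $1, a, y_1,\dots,y_k, z_1,\dots,z_l$; it is finite-dimensional. For a $\mathbb{Q}$-linear map $f : V \to \mathbb{Q}$, define the area functional $\mathcal{A}_f(u\times v) = f(u)f(v)$ on rectangles whose sides lie in $V$. Every rectangle of the extended grid has dimensions $y_j \times z_m$. By $\mathbb{Q}$-linearity, $\mathcal{A}_f$ is additive over the grid. This has two consequences. The $a\times 1$ rectangle is the union of all grid cells, so $f(a)f(1) = \sum_{j,m} f(y_j)f(z_m)$, since $a=\sum_j y_j$ and $1=\sum_m z_m$. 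Likewise, each square $x_i$ is the union of the cells indexed by $Y_i\times Z_i$, so $f(x_i)^2 = \sum_{y\in Y_i}\sum_{z\in Z_i} f(y)f(z)$, where $x_i = \sum_{Y_i} y = \sum_{Z_i} z$ lies in $V$. Every cell belongs to exactly one square, so summing gives
\[
f(a)f(1) = \sum_{i=1}^n f(x_i)^2 .
\]
This identity is the key step. The only care needed is to check that each grid cell lies in exactly one square: the grid refines the tiling, so no cell straddles a square boundary.

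\textbf{Rationality of $a$.} Suppose $a\notin\mathbb{Q}$. Then $1$ and $a$ are $\mathbb{Q}$-independent, so we can extend $\{1,a\}$ to a basis of $V$. Choose $f$ with $f(1)=1$ and $f(a)=-1$. The identity gives $-1 = \sum_i f(x_i)^2 \ge 0$, a contradiction. Hence $a\in\mathbb{Q}$.

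\textbf{Rationality of the $x_i$.} Now $a\in\mathbb{Q}$, so $a\in\mathbb{Q}\cdot 1$. Suppose some $x_{i_0}\notin\mathbb{Q}$. Then $\{1, x_{i_0}\}$ is $\mathbb{Q}$-independent, and we extend it to a basis of $V$. Choose $f$ with $f(1)=0$ and $f(x_{i_0})=1$, which forces $f(a)=a f(1)=0$. The identity yields $0 = \sum_i f(x_i)^2 \ge 1$, a contradiction. Thus every $x_i$ is rational. Equivalently, the identity shows that every $\mathbb{Q}$-linear $f$ vanishing on $\mathbb{Q}$ vanishes on all $x_i$, which forces $x_i\in\mathbb{Q}$.

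The main obstacle is the additivity of the area functional. The extended grid handles it cleanly, because both the big rectangle and each square decompose exactly into grid cells with side lengths in $V$. The rest is linear algebra: choosing basis extensions, which is possible because $V$ is finite-dimensional over $\mathbb{Q}$.
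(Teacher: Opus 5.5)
Your proof is correct and follows the paper's argument: the same extended-grid pseudo-area identity $f(a)f(1)=\sum_i f(x_i)^2$, applied with a functional that vanishes on $1$ and equals $1$ on an irrational $x_{i_0}$ (your appeal to $a\in\mathbb{Q}$ at that step is unnecessary, since $f(1)=0$ already makes the left side zero). The one difference is that you prove $a\in\mathbb{Q}$ directly with the Aigner--Ziegler choice $f(1)=1$, $f(a)=-1$, whereas the paper deduces it afterwards by writing $a=\sum_j y_j$ --- a step that tacitly needs something like ``$a$ is the sum of the sides of the bottom-row squares,'' so your version is slightly more self-contained.
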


\begin{proof}
    Suppose, for the sake of contradiction, that there exists an index $i$ ($1 \leq i \leq n$) such that $x_i \notin \mathbb{Q}$. Let $E = \operatorname{Span}_{\mathbb{Q}}(y_1, \dots, y_k, z_1, \dots, z_l)$ be the $\mathbb{Q}$-vector space spanned by the segment lengths of the extended grid. 
    
    Notice that $1 \in E$ because $\sum\limits_{j=1}^l z_j = 1$. Furthermore, $x_j \in E$ because $\sum\limits_{y \in Y_j} y = \sum\limits_{z \in Z_j} z = x_j$ for all $1\leq j\leq n$. Since $1 \in E$, $x_i \in E$, and $x_i$ is irrational, it follows that $\dim_{\mathbb{Q}}(E) \geq 2$.
    
    There thus exists a $\mathbb{Q}$-linear map $\phi : E \to \mathbb{R}$ such that $\phi(1) = 0$ and $\phi(x_i) = 1$. Let us introduce a bilinear map corresponding to a pseudo-area for rectangles, defined by $A(r_1, r_2) = \phi(r_1)\phi(r_2)$. For any square of size $x_j$, its pseudo-area is $A(x_j) = \phi(x_j)^2$. The total pseudo-area of the rectangle is conserved:
    \begin{align*}
        A(a, 1) = \phi(a)\phi(1) &= \left( \sum_{u=1}^k \phi(y_u) \right) \left( \sum_{v=1}^l \phi(z_v) \right) \\
        &= \sum_{u=1}^k \sum_{v=1}^l \phi(y_u)\phi(z_v) \\
        &= \sum_{j=1}^n \sum_{y \in Y_j} \sum_{z \in Z_j} \phi(y)\phi(z) \\
        &= \sum_{j=1}^n \phi(x_j)^2 = \sum_{j=1}^n A(x_j).
    \end{align*}
    However, since $\phi(1) = 0$, we have $A(a, 1) = 0$. On the other hand, $A(x_j) = \phi(x_j)^2 \geq 0$ for all $1 \leq j \leq n$, and specifically $A(x_i) = \phi(x_i)^2 = 1$. This yields $\sum\limits_{j=1}^n A(x_j) \geq 1 > 0$, which contradicts $A(a, 1) = 0$. 
    
    Consequently, all square sizes $x_i$ must be rational. It immediately follows that $a = \sum\limits_{j=1}^k y_j \in \mathbb{Q}$, concluding the proof.
\end{proof} 

Having established the rationality of the tiling, we will find a bound on the number of line segments in the extended grid in relation to the order $n$ of the tiling. This result will be very useful to refine our combinatorial and algebraic bounds later on. 

To highlight the duality of our approach, we will demonstrate this proposition in two completely different ways: first, immediately below, using a direct geometric mapping argument; and second, in Section 3, using an algebraic argument derived from the rank of our incidence matrix.

\begin{proposition} \label{prop:segments_bound}
    With the notations defined above, we have $k + l \leq n + 1$.
\end{proposition}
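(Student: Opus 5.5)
The plan is to encode the tiling as a bipartite graph and turn the inequality into the fact that a connected graph has at least (number of vertices) $-1$ edges. Let $X$ be the set of abscissae of the left sides of the squares, and $Y$ the set of ordinates of their bottom sides.

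\textbf{Step 1: counting vertices.} The abscissae of the vertical lines of the extended grid, other than $x=a$, are exactly $0$ together with the $k-1$ interior vertical lines. Each interior line is the extension of some square side. If it is the right side of a square, then just to the right of that side there is another square whose left side lies on the same line. Hence every such abscissa is the left abscissa of some square, and $|X| = k$. Symmetrically, $|Y| = l$.

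\textbf{Step 2: the graph.} Let $G$ be the bipartite graph with vertex set $X \sqcup Y$. For each square $x_i$ with bottom-left corner $(u_i,v_i)$, put one edge joining $u_i \in X$ to $v_i \in Y$. So $G$ has $k+l$ vertices and $n$ edges. Every vertex is incident to some edge, by Step 1. It therefore suffices to show that $G$ is connected, because then $n \geq k+l-1$.

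\textbf{Step 3: connectivity (the main step).} I would show by induction on the lexicographic order of corners (the canonical order already used for the labels) that the edge of every square lies in the same component as the edge of the square with corner $(0,0)$. Take a square $S$ with corner $(u,v) \neq (0,0)$, and look at the three quadrants around $(u,v)$ not covered by $S$.

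Suppose first that $u>0$ and $v>0$. Let $T$ be the square containing $(u+\varepsilon, v-\varepsilon)$ and $R$ the square containing $(u-\varepsilon, v+\varepsilon)$. Then $T$ has its top at height $v$, and $R$ has its right side at abscissa $u$.
\begin{itemize}
\item If the left side of $T$ is at abscissa $u$, then $T$ has corner $(u,v')$ with $v'<v$. It shares the vertex $u$ with $S$ and comes earlier in the order.
\item Otherwise $T$ covers a neighbourhood of $(u,v)$ below height $v$, on both sides of $x=u$. The square $R$ lies above $T$ and to the left of $u$, so $R$'s bottom must be at height exactly $v$. Then $R$ has corner $(u',v)$ with $u'<u$. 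It shares the vertex $v$ with $S$ and again comes earlier.
\end{itemize}
If $u=0$, the square $T$ directly below $S$ must have left side $0$, so it shares the vertex $u=0$ with $S$. If $v=0$, the square $R$ to the left must have bottom $0$, so it shares the vertex $v=0$ with $S$.

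In all cases $S$ is adjacent in $G$ to an earlier square, and induction gives connectivity. The delicate point is this corner case analysis: checking that near $(u,v)$ either the vertical line through $u$ continues below $v$ or the horizontal line through $v$ continues to the left of $u$. This is the step I expect to need the most care, with $\varepsilon$ chosen smaller than all nonzero coordinate gaps.

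Combining the three steps gives $n \geq |X|+|Y|-1 = k+l-1$, that is, $k+l \leq n+1$.
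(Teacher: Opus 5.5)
Your proof is correct, and it takes a different global route from the paper's, although both rest on the same local fact. That fact: at a bottom-left corner $P=(u,v)\neq(0,0)$, either another square has its bottom-left corner on the line $x=u$ below $P$, or one has its bottom-left corner on the line $y=v$ to the left of $P$.

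The paper uses this fact in a charging argument. Each of the $k+l$ lines (left boundaries of columns, bottom boundaries of rows) is assigned to the square whose bottom-left corner is lowest (resp.\ leftmost) on it. The local fact shows that only the square at $(0,0)$ can receive two lines, so $k+l\le 2+(n-1)$. You instead use the same fact to show that every square is adjacent in your bipartite graph $G$ to a lexicographically earlier square. This makes $G$ connected, and you conclude from $|E|\ge|V|-1$.

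What each approach buys:
\begin{itemize}
\item The paper's version is shorter and needs neither graph theory nor an induction.
\item Your version proves a stronger structural statement: the line--square graph is connected. The counting bound alone would not give this, since $|V|\le|E|+1$ does not force connectivity.
\item Your Step 1 proves that every vertical grid line other than $x=a$ (and every horizontal one other than $y=1$) actually carries a bottom-left corner of some square. The paper's map tacitly needs this to be well defined and does not justify it.
\item Your $\varepsilon$-case analysis makes rigorous the paper's brief claim that a corner which is both lowest on its vertical line and leftmost on its horizontal line must be the global corner $(0,0)$.
\end{itemize}
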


\begin{proof}[Geometric Proof]
    Let $\mathcal{V}$ be the set of the $k$ vertical lines that form the left boundaries of the $k$ vertical strips (columns) in the grid. Note that the rightmost boundary of the rectangle is not included in $\mathcal{V}$.
    Similarly, let $\mathcal{H}$ be the set of the $l$ horizontal lines that form the bottom boundaries of the $l$ horizontal strips (rows) in the grid. The topmost boundary of the rectangle is not included in $\mathcal{H}$. 
    The total number of these lines is $|\mathcal{V}| + |\mathcal{H}| = k + l$.

    We construct a mapping from $\mathcal{V} \cup \mathcal{H}$ to the set of the $n$ squares in the tiling by associating each line to the square located at its "origin":
    \begin{itemize}
        \item For each vertical line $V \in \mathcal{V}$, we consider its lowest point that is the bottom left corner of a square and we associate $V$ with this specific square.
        \item For each horizontal line $H \in \mathcal{H}$, we consider its leftmost point that is the bottom left corner of a square and we associate $H$ with this square.
    \end{itemize}

    Let us determine how many lines can be associated with any single square $S$. Suppose a square $S$, whose bottom-left corner is denoted by $P$, is associated with both a vertical line $V \in \mathcal{V}$ and a horizontal line $H \in \mathcal{H}$. 
    
    This implies that $P$ is the lowest point of $V$ and the leftmost point of $H$. Consequently, no segment of the grid extends below $P$, and no segment extends to the left of $P$. Since $P$ is a corner of a square belonging to the tiling, this geometric configuration is only possible if $P$ is the global bottom-left corner of the entire $a \times 1$ rectangle.

    Therefore, the square located at the bottom-left of the rectangle is associated with exactly $2$ lines (the left border and the bottom border of the rectangle). Every other square in the tiling (there are $n-1$ of them) serves as the origin for at most $1$ line.
    
    By summing these associations, we obtain an upper bound on the total number of lines:
    \[
    k + l \leq 2 + (n - 1) = n + 1.
    \]
\end{proof}

\section{Incidence Matrix}

The core idea of this paper is to generalise the argument of Theorem \ref{thm:rationality}. Notice that the function $\phi$ does not need to be a linear map over a vector space, it merely needs to satisfy a finite set of additive relations derived from the geometry of the tiling. Moreover, it is not necessary to define such a function, but only to associate a variable to each of the numbers $1, a, x_1, \dots, x_n, y_1, \dots, y_k, z_1, \dots, z_l$ that verifies some relations.

The additive relations are formalised using a matrix $M$ called the incidence matrix of a tiling.

Let us define the column vector of the formal geometric variables: 
\[ X = (1, a, x_1, \dots, x_n, y_1, \dots, y_k, z_1, \dots, z_l)^T \in \mathbb{R}^{N} \]
where the total number of variables is $N = 2 + n + k + l$. 

We construct the incidence matrix $M$ such that the relation $MX = 0$ captures all linear constraints of the extended grid. The matrix is composed of exactly $2n + 2$ rows and $N$ columns, partitioned into three distinct blocks:
\begin{itemize}
    \item \textbf{Block $L$ (2 rows)}: This block translates the global dimensions of the rectangle into two specific rows, $L_1$ and $L_2$:
    \begin{itemize}
        \item Row $L_1$ enforces $1 = \sum\limits_{i=1}^l z_i$. It contains a $-1$ at the index of the variable "$1$", a $1$ at the indices of each $z_i$, and $0$ elsewhere.
        \item Row $L_2$ enforces $a = \sum\limits_{i=1}^k y_i$. It contains a $-1$ at the index of $a$, a $1$ at the indices of each $y_i$, and $0$ elsewhere.
    \end{itemize}
    \item \textbf{Block $R_1$ ($n$ rows)}: For each $1 \leq i \leq n$, the $i$-th row enforces $x_i = \sum\limits_{y \in Y_i} y$. It has a $-1$ at the index of $x_i$ and a $1$ at the indices of all $y \in Y_i$.
    \item \textbf{Block $R_2$ ($n$ rows)}: For each $1 \leq i \leq n$, the $i$-th row enforces $x_i = \sum\limits_{z \in Z_i} z$, structured similarly to $R_1$.
\end{itemize}

By construction, the vector $X$ containing the geometric lengths of the tiling is a solution to this system, meaning $X \in \ker M$. Since its first coordinate is $1 \neq 0$, we have $\dim \ker M \geq 1$.

\begin{theorem} \label{thm:kernel_dim}
    For any square tiling, the associated matrix $M$ satisfies $\dim \ker M = 1$. Consequently, the incidence matrix uniquely determines the tiling up to overall scaling.
\end{theorem}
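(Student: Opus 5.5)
The plan is to take an arbitrary vector $W=(w_0, \alpha, \xi_1,\dots,\xi_n,\eta_1,\dots,\eta_k,\zeta_1,\dots,\zeta_l)^T \in \ker M$ and show it is proportional to $X$. This is the pseudo-area argument of Theorem \ref{thm:rationality}, with the map $\phi$ replaced by the formal kernel vector. Set $W' = W - w_0 X$. Then $W' \in \ker M$, and its coordinate at the variable ``$1$'' vanishes. It suffices to show $W'=0$, since then $W = w_0 X$ and $\ker M = \operatorname{Span}(X)$.

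\textbf{Pseudo-area identity.} For any $W'=(0,\alpha',\xi',\eta',\zeta')\in\ker M$ I compute the bilinear quantity $\bigl(\sum_u \eta'_u\bigr)\bigl(\sum_v \zeta'_v\bigr)$ in two ways, mirroring the display in the proof of Theorem \ref{thm:rationality}.

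First, rows $L_1$ and $L_2$ give $\sum_v \zeta'_v = 0$ and $\sum_u \eta'_u=\alpha'$. Hence the quantity equals $\alpha'\cdot 0 = 0$.

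Second, expand it as $\sum_{u,v}\eta'_u\zeta'_v$. The combinatorial fact used in Theorem \ref{thm:rationality} is that every cell $(u,v)$ of the extended grid lies in exactly one square $j$, and the cells inside square $j$ are exactly those in $Y_j\times Z_j$. This fact is purely geometric, so it holds for any vector. The sum therefore regroups as $\sum_j \bigl(\sum_{y\in Y_j}\eta'_y\bigr)\bigl(\sum_{z\in Z_j}\zeta'_z\bigr)$. By the rows of $R_1$ and $R_2$, this equals $\sum_j (\xi'_j)^2$.

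Comparing the two computations gives $\sum_j(\xi'_j)^2=0$. Over $\mathbb{R}$ this forces $\xi'_j=0$ for all $j$. This is the only place positivity is needed, and the kernel can be taken over $\mathbb{R}$ (equivalently over $\mathbb{Q}$, since $M$ is an integer matrix and the kernel dimension is the same).

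\textbf{Recovering the segment variables.} This is the step I expect to be the main obstacle. Knowing $\xi'=0$, I must deduce $\eta'=0$ and $\zeta'=0$. Rows $R_1$ and $R_2$ now say that for each square $j$, $\sum_{y\in Y_j}\eta'_y=0$ and $\sum_{z\in Z_j}\zeta'_z=0$.

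For the horizontal variables, sweep the columns from left to right. Column $u$ has a left boundary line $V\in\mathcal V$, and by the geometric proof of Proposition \ref{prop:segments_bound} some square $S$ has its bottom-left corner on $V$. The set $Y_S$ is then a consecutive interval of columns starting at $u$.

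I would argue by induction on $u$ that $\eta'_1=\dots=\eta'_u=0$. The delicate point is that $Y_S$ may contain columns to the right of $u$ that are not yet known to vanish, so a plain induction does not close. The fix I plan is to use, for each vertical line, a square whose \emph{right} side lies on the next grid line. Alternatively, order the squares so that each new relation has exactly one unknown: take the square whose interval $Y_S$ ends earliest among those starting at column $u$. Every interior grid line is the right side of some square, because lines are extensions of square sides. So for each $u$ there is a square whose interval ends exactly at column $u$. Inducting from the right end downward, or equivalently using left-start and right-end intervals together, gives a triangular system whose only solution is $0$.

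The same argument handles $\zeta'$. Once $\eta'=0$, row $L_2$ gives $\alpha'=0$. Hence $W'=0$ and $\dim\ker M=1$.

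Finally, uniqueness up to scaling follows because the tiling's lengths are recovered as the unique kernel vector normalized to have first coordinate $1$.
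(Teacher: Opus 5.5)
Your proposal is correct and is essentially the paper's proof. The paper reduces to a kernel vector with zero coordinate at ``$1$'' by combining two independent kernel vectors, where you subtract $w_0X$; both then use the pseudo-area identity to force every $\xi'_j=0$, propagate zeros through the segment variables, and finish with row $L_2$. For the propagation step the paper uses exactly the fact you end up with: the right boundary of column $i$ carries the right edge of some square, whose span is $\{y_m,\dots,y_i\}$, and this is paired with a left-to-right induction (smallest $i$ with $v_{y_i}\neq 0$). Your ``earliest-ending square starting at column $u$'' variant does not by itself leave one unknown per relation, and right-ending intervals go with the left-to-right induction (left-starting ones with the downward one).
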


\begin{proof}
    Let $V = (v_1, v_a, v_{x_1}, \dots, v_{x_n}, v_{y_1}, \dots, v_{y_k}, v_{z_1}, \dots, v_{z_l})^T \in \ker M$.

    First, $k$ and $l$ are determined from $M$ thanks to the first two rows.\\
    The components of $V$ satisfy the additive relations of the grid. By replicating the algebraic manipulation from Theorem \ref{thm:rationality} (which relies solely on these additive relations), we obtain the pseudo-area conservation:
    \[ v_1 v_a = \sum_{j=1}^n v_{x_j}^2 \]
    
    Suppose that the first coordinate is zero, $v_1 = 0$. This implies $0 = \sum\limits_{j=1}^n v_{x_j}^2$. Since we are working over $\mathbb{R}$, this forces $v_{x_j} = 0$ for all $1 \leq j \leq n$.
    
    Now, assume there exists some horizontal segment variable such that $v_{y_i} \neq 0$. Let $i$ be the smallest such index. Geometrically, the segment $y_i$ terminates at a vertical line in the extended grid. By construction, there must be at least one square $x_j$ whose right edge lies on this vertical line.
    
    The horizontal span of this square, $Y_j$, is a contiguous subset of the horizontal segments ending exactly at $y_i$. Thus, $Y_j = \{y_m, y_{m+1}, \dots, y_i\}$ for some $m \leq i$.
    Since $V \in \ker M$, the $j$-th row in block $R_1$ enforces:
    \[ v_{x_j} = \sum_{h=m}^i v_{y_h} \]
    Because $i$ is the smallest index for which $v_{y_i} \neq 0$, all terms $v_{y_h}$ for $h < i$ are zero. This leaves $v_{x_j} = v_{y_i}$. However, since $v_{y_i} \neq 0$, this implies $v_{x_j} \neq 0$, which contradicts our earlier deduction that all $v_{x_j} = 0$.
    
    Consequently, no such $v_{y_i}$ can exist, and $v_{y_i} = 0$ for all $i$. By a similar argument, $v_{z_i} = 0$ for all $i$. Finally, the relation from $L_2$ gives $v_a = \sum v_{y_i} = 0$.
    
    Therefore, any vector in $\ker M$ with a first coordinate $v_1 = 0$ is necessarily the zero vector. If $\dim \ker M \geq 2$, we could form a non-trivial linear combination of two independent vectors to zero out the first coordinate, yielding a non-zero vector with $v_1 = 0$, which is impossible. 
    
    Thus, $\dim \ker M = 1$, and any solution is collinear to $X$.
\end{proof}
We can use Theorem \ref{thm:kernel_dim} to show Proposition \ref{prop:segments_bound} in a completely different, algebraic way:

\begin{proof}[Algebraic Proof of Proposition \ref{prop:segments_bound}]
    The incidence matrix $M$ has $N = n + k + l + 2$ columns and $2n + 2$ rows. 
    Since we have established that $\dim \ker M = 1$, by the Rank-Nullity theorem, the rank of $M$ is:
    \[ \operatorname{Rank}(M) = N - \dim \ker M = n + k + l + 1 \]
    A fundamental property of linear algebra is that the rank of a matrix cannot exceed its number of rows. Therefore, we have:
    \[ n + k + l + 1 \leq 2n + 2 \]
    Which yields:
    \[ k + l \leq n + 1 \]
    This concludes the algebraic proof of the geometric bound.
\end{proof}

With the matrix representation established, we can derive a combinatorial upper bound on the total number of valid square tilings of a given order $n$, and thus prove the finiteness of the number of tiling of order $n$.

\begin{theorem}\label{thm:combinatorial_bound}
    The number of distinct square tilings of order $n$ is finite, and upper bounded asymptotically by $O(c^n n^{3n+2})$ where $c = \frac{e}{64}$.
\end{theorem}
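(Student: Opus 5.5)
By Theorem \ref{thm:kernel_dim}, a tiling is determined, once normalised to height $1$, by its incidence matrix $M$. So I will bound the number of distinct matrices $M$ that can occur for tilings of order $n$. This count bounds the number of tilings, and finiteness follows immediately.

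First, I fix the pair $(k,l)$. By Proposition \ref{prop:segments_bound} we have $k+l \leq n+1$, with $k,l \geq 1$. This leaves at most $O(n^2)$ admissible pairs.

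For fixed $(k,l)$, the matrix $M$ is determined by the rows of $R_1$ and $R_2$, since block $L$ depends only on $k$ and $l$. Row $i$ of $R_1$ is the indicator of $Y_i$, and $Y_i$ is a contiguous interval of $\{1,\dots,k\}$, because a square spans consecutive columns of the extended grid. Similarly $Z_i$ is a contiguous interval of $\{1,\dots,l\}$. Hence each square contributes one pair $(Y_i,Z_i)$ taken from a set of size
\[ P(k,l) = \frac{k(k+1)}{2}\cdot\frac{l(l+1)}{2}. \]

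The key saving comes from the canonical labelling. Two distinct squares cannot have the same pair $(Y_i,Z_i)$, since otherwise they would occupy the same grid cells. The lexicographic order of bottom-left corners is also recovered from the pairs, namely from the left endpoint of $Y_i$ and then the bottom endpoint of $Z_i$. Therefore $M$ is determined by the unordered set $\{(Y_i,Z_i)\}$ of $n$ distinct pairs. The number of choices is therefore at most
\[ \binom{P(k,l)}{n} \leq \frac{P(k,l)^n}{n!}. \]

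Next I maximise over $k+l\leq n+1$. Since $kl \leq ((n+1)/2)^2$, we get $P(k,l) \lesssim \frac{1}{4}\left(\frac{n+1}{2}\right)^4 \sim \frac{n^4}{64}$. This gives
\[ P(k,l)^n \leq \frac{(n+1)^{4n}}{64^n}, \]
where $(n+1)^{4n} = O(e^4 n^{4n})$ only contributes a constant factor. Stirling's formula gives $n! \geq (n/e)^n$. Combining these, the count for a single pair $(k,l)$ is $O\!\left((e/64)^n n^{3n}\right)$. Summing over the $O(n^2)$ pairs $(k,l)$ yields the claimed $O(c^n n^{3n+2})$ with $c=e/64$.

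I expect the main obstacle to be the justification that the canonical ordering allows division by $n!$. This needs two facts: that the pairs $(Y_i,Z_i)$ are pairwise distinct, and that the labelling is a function of the set of pairs. Both follow from the fact that the squares tile without overlap and that their corners are grid points of the extended grid. The remaining work is careful bookkeeping of the $(n+1)$ versus $n$ factors, which only affects constants.
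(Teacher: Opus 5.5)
Your proposal is correct and follows the paper's proof essentially step for step: uniqueness from Theorem~\ref{thm:kernel_dim}, the count $P(k,l)=\binom{k+1}{2}\binom{l+1}{2}$ of contiguous rectangular regions, the bound $\binom{P(k,l)}{n}\le P(k,l)^n/n!$ with $n!\ge (n/e)^n$, and a sum over at most $n^2$ pairs $(k,l)$. Your explicit argument that the canonical labelling makes $M$ a function of the set of pairs $(Y_i,Z_i)$ usefully justifies the division by $n!$, which the paper leaves implicit. The one slip is that $P(k,l)^n\le (n+1)^{4n}/64^n$ is false as an exact inequality (for $n=1$, $k=l=1$ it gives $P=1>1/4$), because $(k+1)(l+1)$ can reach $((n+3)/2)^2$. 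The paper's bound $P(k,l)\le (n+3)^4/64$, together with $(1+3/n)^{4n}\le e^{12}$, repairs this at the cost of a constant only, as you anticipated.
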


\begin{proof}
    By Theorem \ref{thm:kernel_dim}, a matrix uniquely determines the geometry of the tiling. Moreover, having fixed $k$ and $l$, the tiling is entirely encoded by the incidence blocks $R_1$ and $R_2$. Therefore, bounding the number of valid configurations for these blocks is sufficient to bound the total number of tilings.
    
    Consider an extended grid constructed from a tiling. It consists of $k$ horizontal variables $y_1, \dots, y_k$ and $l$ vertical variables $z_1, \dots, z_l$. Using Proposition \ref{prop:segments_bound}, we know that $k + l \leq n + 1$.
    
    In this grid, each square $x_i$ corresponds to a discrete rectangular region. The horizontal span of this region, denoted $Y_i$, must be a contiguous subsegment of $(y_1, \dots, y_k)$. The number of such contiguous subsegments of length at least $1$ is exactly $\binom{k+1}{2}$. Similarly, the vertical span $Z_i$ must be a contiguous subsegment of $(z_1, \dots, z_l)$, yielding $\binom{l+1}{2}$ possibilities.
    
    Therefore, the maximum number of distinct rectangular regions that can be formed in a $k \times l$ extended grid is:
    \[
    P_{k,l} = \binom{k+1}{2} \binom{l+1}{2} \leq \frac{(k+1)^2 (l+1)^2}{4}
    \]
    
    Subject to the constraint $k + l \leq n + 1$, the product $(k+1)(l+1)$ is maximized when $k = l = \frac{n+1}{2}$. This yields $P_{k,l} \leq \frac{(n+3)^4}{64}$.
    
    Using the inequality $\binom{N}{n} \leq \frac{N^n}{n!}$ and the lower bound from Stirling's formula $n! \geq (n/e)^n$, we can bound the binomial coefficient:
    \[ \binom{P_{k,l}}{n} \leq \frac{(P_{k,l})^n}{n!} \leq \frac{\left( \frac{(n+3)^4}{64} \right)^n}{(n/e)^n} = \left(\frac{e}{64}\right)^n \frac{(n+3)^{4n}}{n^n} \]
    
    We can rewrite the rightmost fraction by factoring out $n^{4n}$:
    \[ \frac{(n+3)^{4n}}{n^n} = n^{3n} \left(1 + \frac{3}{n}\right)^{4n} \]
    
    Using the classic analytical bound $(1 + x/n)^n \leq e^x$, we have $\left(1 + \frac{3}{n}\right)^{4n} \leq e^{12}$. Substituting this back, we obtain:
    \[ \binom{P_{k,l}}{n} \leq e^{12} \left(\frac{e}{64}\right)^n n^{3n} \]
    
    The total number of tilings is obtained by summing over all pairs $(k, l)$ where $k+l\leq n+1$. There is exactly $\frac{n(n+1)}{2}$ such pairs, which is upper bounded by $n^2$. Therefore, the total number of tilings satisfies:
    \[ N_{\text{total}} = \sum_{k,l} N_{\text{tilings,k,l}} \leq n^2 \cdot \left( e^{12} \left(\frac{e}{64}\right)^n n^{3n} \right) = e^{12} c^n n^{3n+2} \]
    where $c = \frac{e}{64}$ is a positive constant.
    
    Consequently, the upper bound is asymptotically $O(c^n n^{3n+2})$.
\end{proof}

\vspace{0.5cm}
\noindent \textbf{Remark (Planar Maps and better Asymptotics):} 
While our algebraic approach provides a simple bound of $O(c^nn^{3n+2})$ by enumerating all mathematically compatible matrix configurations, the true number of physical tilings is vastly smaller. Our matrix model counts many "intertwined" logical configurations that satisfy segment additions but cannot be drawn flat on a plane without overlapping. Brooks, Smith, Stone, and Tutte's approach \cite{BSST1940} yields a much better upper bound by counting planar graphs.

\section{Upper Bound on the Least Common Multiple}

Let $M$ be the incidence matrix defined previously, with dimensions $(2n+2) \times N$, where $N = n + k + l + 2$. We know $\dim \ker M = 1$. Let us extract a maximal rank submatrix $M'$ of size $(N-1) \times N$ by selecting $N-1$ linearly independent rows from $M$. 

By the cofactor method (or generalized cross product), the unique one-dimensional kernel of $M'$ is spanned by the integer vector $u \in \mathbb{Z}^N$ defined by:
$$ u_j = (-1)^j \det(M'_{j}) $$
where $M'_{j}$ is the square $(N-1) \times (N-1)$ matrix obtained by removing the $j$-th column of $M'$.

Since the vector $X \in \ker M = \ker M'$, we have $X = c \cdot u$ for some $c \in \mathbb{Q}$. If we multiply $X$ by $D = \operatorname{lcm}(\text{denominators}(a, x_1, \dots, x_n, y_1, \dots, y_k, z_1, \dots, z_l))$, we obtain a primitive integer vector. Since $u$ is also an integer vector in the same direction, the primitive vector must divide $u$. Thus, for any coordinate $j$, we have $D \cdot X_j \leq |u_j| = |\det(M'_{j})|$.\\
    Therefore, to bound the LCM of the denominators, it suffices to bound the determinant of $M'_{j}$.
\begin{theorem}\label{thm:den_bound}
    For any square tiling of order $n$, the least common multiple $D$ of the denominators of the $x_i,y_i,z_i$ and $a$ is bounded by $2^{k+l-1} \leq 2^n$.\\ 
    Furthermore, we obtain the alternative bound:
    $$ D \leq \frac{2^{k+l-2}}{x} $$
    where $x = \max \{ x_i \mid \text{both rows } R_{1,i} \text{ and } R_{2,i} \text{ are kept in the submatrix } M' \}$.
\end{theorem}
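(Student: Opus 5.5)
The plan is to apply the reduction stated just before the theorem to a well-chosen coordinate $j$ and then bound $|\det M'_j|$ by a multilinearity argument based on the total unimodularity of interval matrices (Ghouila-Houri). For the first bound take $j$ to be the column of the variable ``$1$''. Since $X_1=1$, we get $D = D\cdot X_1 \le |u_1| = |\det M'_1|$. For the second bound take $j$ to be the column of the $x_i$ that realises the maximum $x$. This gives $D\,x \le |\det M'_{x_i}|$.

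\textbf{Step 1: row/column reductions for $j=1$.} First note that $M'_1$ is nonsingular, since $u_1\neq 0$. In $M'_1$ the column of $a$ has a single nonzero entry, namely $-1$ in row $L_2$; that row must therefore be kept, and expanding along this column costs a factor $\pm1$ and deletes $L_2$. For each $x_i$, the column of $x_i$ has nonzero entries only in $R_{1,i}$ and $R_{2,i}$, so at least one of these rows is kept.
\begin{itemize}
\item If exactly one is kept, expand along the column of $x_i$: this gives a factor $\pm1$ and deletes that row.
\item If both are kept, replace $R_{2,i}$ by $R_{2,i}-R_{1,i}$, which does not change the determinant, and then expand along the column of $x_i$ through $R_{1,i}$.
\end{itemize}
What remains is a square matrix $B$ whose columns are indexed by $y_1,\dots,y_k,z_1,\dots,z_l$, so it is $(k+l)\times(k+l)$. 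Its rows are $L_1$, which has zeros on the $y$'s and all ones on the $z$'s, and, for each doubly kept $i$, a row with $-1$ on an interval of $y$'s and $+1$ on an interval of $z$'s. After negating the $y$-columns, every row has the form $p_r+q_r$, where $p_r$ is an interval vector supported on the $y$-part and $q_r$ is an interval vector supported on the $z$-part (either may be zero).

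\textbf{Step 2: the key estimate.} Expand $\det B$ multilinearly in its rows, choosing $p_r$ or $q_r$ in each row. A term corresponding to a choice $S$ (the rows where $p$ is chosen) has rows supported either on the $y$-block or on the $z$-block. Such a term is zero unless $|S|=k$, and in that case it equals $\pm\det P_S\cdot\det Q_{S^c}$. Here $P_S$ and $Q_{S^c}$ are square interval matrices, hence totally unimodular, so every term lies in $\{0,\pm1\}$. Row $L_1$ admits only one nonzero choice, so at most $2^{k+l-1}$ terms survive. This gives $D\le 2^{k+l-1}\le 2^n$, using Proposition~\ref{prop:segments_bound}.

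\textbf{Step 3: the alternative bound.} Now remove instead the column of the chosen $x_i$, for which $R_{1,i}$ and $R_{2,i}$ are both kept. The column of ``$1$'' now has a single nonzero entry, in $L_1$. That row must be kept because $\det M'_{x_i}=\pm u_{x_i}\neq0$, and expanding along it deletes $L_1$. The columns of $a$ and of the other $x_{i'}$ are handled exactly as in Step 1. In the final $(k+l)\times(k+l)$ matrix, the rows $R_{1,i}$ and $R_{2,i}$ are now pure interval rows supported on the $y$-part and the $z$-part respectively, so each admits only one choice. The same expansion then yields at most $2^{k+l-2}$ unimodular terms, hence $D\,x\le 2^{k+l-2}$.

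The main obstacle is Step 2. One has to check carefully that each term of the expansion really factors as a product of two determinants of \emph{square} submatrices of interval matrices, with the correct block-size bookkeeping. One also has to check that the row operation $R_{2,i}-R_{1,i}$ together with the sign flips preserves the interval structure, so that Ghouila-Houri's theorem applies. A secondary point is counting the rows precisely, so that the remaining matrix is exactly $(k+l)\times(k+l)$ and the forced rows ($L_1$, or $R_{1,i}$ and $R_{2,i}$) give the savings in the exponent.
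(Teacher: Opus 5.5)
Your proposal is correct and is essentially the paper's proof: the paper expands $\det M'_1$ multilinearly in the $x_i$ columns instead of doing $R_{2,i}-R_{1,i}$ and expanding by rows, and each of its $2^{k+l-1}$ Laplace-reduced terms is, up to sign and row order, exactly your matrix with rows $p_r$ ($r\in S$) and $q_r$ ($r\notin S$), which the paper treats directly as an interval matrix in the column order $(y_1,\dots,y_k,z_1,\dots,z_l)$ rather than factoring it as $\det P_S\cdot\det Q_{S^c}$. The one point to make explicit already in Step~1 is that $L_1$ is a row of $M'$ (for instance, $u_j\neq 0$ for $j\neq 1$ forces the column of ``1'' in $M'$ to be nonzero); you only establish this in Step~3, and your count of $2^{k+l-1}$ relies on it.
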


\begin{proof}
    The matrix $M$ has $2n+2$ rows, while the submatrix $M'$ has $N-1 = n+k+l+1$ rows. Therefore, exactly $(2n+2) - (n+k+l+1) = n + 1 - k - l$ rows from $M$ are removed to form $M'$.
    
    Because $X \in \ker M'$ consists entirely of positive lengths ($x_i, y_j, z_k, a > 0$), the cofactor vector $u$ cannot contain any zero entries. Consequently, $\det(M'_j) \neq 0$ for all $j$.
    
    Looking at the first coordinate (which is $1$ in $X$), we have $D \times 1 \leq |\det(M'_{1})|$. The matrix $M'_{1}$ is formed by removing the first column (the variable "$1$"). Its columns thus correspond to $(a, x_1, \dots, x_n)$ and $(y_1, \dots, y_k, z_1, \dots, z_l)$. 
    
    To prevent any determinant from evaluating to zero, $M'$ must not contain any column of zeros. Thus, to obtain $M'$ from $M$, the rows $L_1$ and $L_2$ can't be removed, as the column corresponding to "$1$" and $a$ in $M'$ would be entirely zero, making $\det(M'_j) = 0$ for all $j > 2$, which is impossible.
    Moreover, for each $1 \leq i \leq n$, the column for $x_i$ contains exactly two non-zero entries ($-1$s) located in rows $R_{1,i}$ and $R_{2,i}$. If both rows are removed, the column for $x_i$ would be zero. Thus, at most one row from each pair $(R_{1,i}, R_{2,i})$ can be removed.
    
    Since exactly $n + 1 - k - l$ rows are removed in total, and they must be selected from distinct row pairs $(R_{1,i}, R_{2,i})$, exactly $n - (n + 1 - k - l) = k + l - 1$ pairs remain fully intact in $M'$.
    
    When we expand $\det(M'_1)$ by multi-linearity on the $n$ columns corresponding to $x_1, \dots, x_n$, $n + 1 - k - l$ columns contain exactly one non-zero entry and $k + l - 1$ columns contain exactly two non-zero entries.

    This splits $\det(M'_1)$ into a sum of exactly $2^{k+l-1}$ determinants:
    $$ \det(M'_{1}) = \sum_{\text{choices}} \det(M'_{\text{choice}}) $$
    where each matrix $M'_{\text{choice}}$ is formed by choosing exactly one non-zero entry for each column $x_i$.
    
    In each of these $2^{k+l-1}$ matrices, the first $n+1$ columns contain exactly one $-1$ and zeros elsewhere. By developing successively along these first $n+1$ columns (standard Laplace expansion), the extracted signs yield $\det(M'_{\text{choice}}) = \pm 1 \times \det(S_{\text{choice}})$, where $S_{\text{choice}}$ is the remaining square submatrix formed by the complementary rows and the columns $(y_1, \dots, y_k, z_1, \dots, z_l)$.
    
    Crucially, because the geometric segments form contiguous boundaries of the rectangles, the $1$s appearing in any row of the $y$ and $z$ columns are consecutive. A matrix composed of $0$s and consecutive $1$s entries in each row is known as an \textit{interval matrix} and is \textit{totally unimodular} (see Ghouila-Houri \cite{GhouilaHouri1962} for example).
    
    Because $S_{\text{choice}}$ is a square submatrix of a totally unimodular matrix, its determinant must be in $\{-1, 0, 1\}$. Therefore:
    $$ |\det(M'_{\text{choice}})| \in \{0, 1\} $$
    
    By the triangle inequality applied to our initial sum of determinants, we conclude:
    $$ D \leq |\det(M'_{1})| \leq 2^{k+l-1} $$
    Since $k + l \leq n + 1$, it immediately follows that $D \leq 2^n$.

    Moreover, suppose there exists a square $x_i$ such that both rows $R_{1,i}$ and $R_{2,i}$ are kept in $M'$. Let us look at the coordinate $x_i$ of $X$. By the divisibility property of our primitive vector, we have:
    $$ D \cdot x_i \leq |u_{x_i}| = |\det(M'_{x_i})| $$
    
    The matrix $M'_{x_i}$ is formed by removing the column corresponding to $x_i$. The remaining $n-1$ columns of the $x$-type now contain exactly $(k + l - 1) - 1 = k + l - 2$ intact row pairs. 
    
    Following the exact same multi-linear expansion and total unimodularity argument as before, the determinant $\det(M'_{x_i})$ splits into exactly $2^{k+l-2}$ terms bounded by $1$. This yields:
    $$ D \cdot x_i \leq |\det(M'_{x_i})| \leq 2^{k+l-2} $$
    Thus, $D \leq \frac{2^{k+l-2}}{x_i}$. Since this inequality is true for all $x_i$ such that $R_{1,i}$ and $R_{2,i}$ are kept in the submatrix $M'$, it is also true for $x$.\\ 
    Thus, $D \leq \frac{2^{k+l-2}}{x}$. This bound is tighter than the general bound whenever $x > 1/2$.

    \textbf{Remark:} Looking at the coordinate $a$, we have $D \cdot a \leq |u_a| = |\det(M'_a)|$. The submatrix $M'_a$ is formed by removing the column for $a$. Since this column (like the column for "$1$") originally contained only a single non-zero entry, removing it leaves the exact same number of intact row pairs ($k+l-1$) for the variables $x_i$. Applying the same multi-linear expansion gives $|\det(M'_a)| \leq 2^{k+l-1}$, which yields $D \leq \frac{2^{k+l-1}}{a}$. While this inequality might appear to give a tighter bound when $a > 1$, it actually provides no new information as it is the same to applying our initial bound (from the first column) to the tiling after applying a homothety of ratio $1/a$, which scales the LCM $D$ proportionally.

\end{proof}

\section{Application: Recovering Kenyon's Bound}

To demonstrate the power of this algebraic framework, we can use our upper bound on the denominators to elegantly recover the main result established by Richard Kenyon in 1996 regarding the minimum number of squares required to tile a rectangle \cite{Kenyon1996}.

\begin{theorem}[Kenyon, 1996]
    Any tiling of a $p \times q$ rectangle (where $p, q \in \mathbb{N}^*$, $q > p$, and $\gcd(p,q) = 1$) by squares requires at least $\max(q/p, \log_2 q)$ squares.
\end{theorem}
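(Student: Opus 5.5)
We prove the two lower bounds $n \ge q/p$ and $n \ge \log_2 q$ separately, where $n$ is the number of squares in a given tiling of the $p \times q$ rectangle. The first is elementary and does not use the matrix framework. The second reduces to Theorem \ref{thm:den_bound} after a normalisation.

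\textbf{Area bound.} Orient the rectangle so that its short side has length $p$. Every square has side at most $p$, so each square has area at most $p^2$. Since the areas sum to $pq$, we get $n p^2 \ge pq$, that is, $n \ge q/p$.

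\textbf{Logarithmic bound.} Apply the homothety of ratio $1/p$, so that the rectangle becomes $a \times 1$ with $a = q/p$, placed as in Section 1 with the short side vertical. The tiling still has order $n$, and Theorem \ref{thm:den_bound} gives
\[
D \le 2^{k+l-1} \le 2^n,
\]
where $D$ is the least common multiple of the denominators of $a, x_i, y_j, z_j$. Since $\gcd(p,q)=1$, the reduced form of $a$ is $q/p$, so its denominator is $p$ and $p \mid D$.

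This only yields $n \ge \log_2 p$, which is the wrong quantity. To involve $q$, we repeat the argument with the normalisation swapped: scale by $1/q$ so that the long side has length $1$ and the short side has length $a' = p/q$. The construction of Sections 1--3 never used $a \ge 1$. Both sums $\sum y_j = a'$ and $\sum z_j = 1$ still hold, the incidence matrix has the same shape, and Theorem \ref{thm:den_bound} applies verbatim, giving $D' \le 2^n$. The reduced denominator of $a' = p/q$ is $q$, so $q \mid D'$ and hence
\[
q \le D' \le 2^n,
\]
that is, $n \ge \log_2 q$. Combining the two bounds gives $n \ge \max(q/p, \log_2 q)$.

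\textbf{Main obstacle.} The delicate point is justifying this renormalisation. Fixing the vertical side to $1$ and placing the long side vertically must be allowed by the earlier setup. The earlier setup fixes the vertical side to $1$ but places no constraint on the size of $a$, so this is legitimate.

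The Remark after Theorem \ref{thm:den_bound}, which says that $D \le 2^{k+l-1}/a$ gives nothing new, confirms that the choice of normalisation matters. If we kept the normalisation with the short side equal to $1$, we would obtain only $D \cdot q/p \le 2^n$ with $p \mid D$. This gives $q \le 2^n$ directly, because $D \ge p$, so $D \cdot q/p \ge q$. This route also works and avoids the rotation entirely, so I would present it as the primary argument.
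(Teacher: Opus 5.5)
Your proof is correct, and its main line is exactly the paper's: the area bound $n \ge q/p$, then rescaling by $1/q$ so that $a = p/q$, whose reduced denominator $q$ divides $D \le 2^{k+l-1} \le 2^n$. Your alternative via the Remark ($D \cdot q/p \le 2^n$ with $p \mid D$) is also valid, but it is the same argument in the other normalisation rather than a different route: the two normalisations are related by $D' = Da$, which is precisely what the paper's Remark observes.
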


\begin{proof}
    Let $n$ be the number of squares in such a tiling. We derive the two bounds independently.\\
    
    The total area of the rectangle is $p \times q$. Because the height of the rectangle is $p$, the side length of any square in the tiling cannot exceed $p$. Thus, the maximum area of a single square is $p^2$. The minimum number of squares required to cover the area $pq$ is therefore:
    $$ n \geq \frac{pq}{p^2} = \frac{q}{p} $$\\
    
    Let us apply a homothety to the rectangle by a factor of $1/q$. The dimensions of the rectangle become $(p/q) \times 1$. By orienting the rectangle such that its width is $1$ and its length is $a = p/q$, we fall exactly into the framework of our extended grid model.
    
    By Theorem \ref{thm:den_bound}, the least common multiple $D$ of the denominators of all geometric variables in this normalized tiling, which explicitly includes $a$, is bounded by $2^{k+l-1}\leq 2^n$ which implies:
    $$ q \leq 2^n $$
    
    Taking the base-2 logarithm of both sides yields:
    $$ n \geq \log_2 q $$
    
    Combining those two inequalities, we conclude that the number of squares must satisfy $n \geq \max(q/p, \log_2 q)$.
\end{proof}

\section{Generalization to Higher Dimensions}

The framework developed in this paper naturally extends to $d$-dimensional space. Let us consider a $d$-dimensional hypercuboid tiled by $n$ hypercubes of side lengths $x_1, \dots, x_n$. By applying a suitable homothety, we assume its dimensions are $a^{(1)} \times a^{(2)} \times \dots \times a^{(d-1)} \times 1$. 

The extended grid construction is generalize by extending the bounding hyperplanes of the $n$ hypercubes to the edges of the hypercuboid. Let $y^{(m)}_1, \dots, y^{(m)}_{k_m}$ denote the lengths of the segments along the $m$-th axis. 

The matrix $M$ is then constructed by defining $d+1$ distinct blocks of rows to encode the tiling:
\begin{itemize}
    \item \textbf{Global dimensions:} $1 = \sum_j y^{(d)}_j$ and $a^{(m)} = \sum_j y^{(m)}_j$ for all $1 \le m \le d-1$.
    \item \textbf{Incidence blocks ($d$ blocks of $n$ rows):} For each hypercube $i$ ($1 \le i \le n$) and each axis $m$ ($1 \le m \le d$), its side length $x_i$ is equal to the sum of its contiguous segments along that axis. This yields $d$ equations per hypercube: $x_i = \sum_{y \in Y_i^{(m)}} y^{(m)}$.
\end{itemize}

In this generalized matrix $M$, the column corresponding to each variable $x_i$ contains exactly $d$ non-zero entries (all $-1$), one for each dimensional block.

\begin{theorem} \label{thm:kernel_dim_d}
    For any $d$-dimensional hypercube tiling, the associated matrix $M$ satisfies $\dim \ker M = 1$ and uniquely determines the tiling up to overall scaling.
\end{theorem}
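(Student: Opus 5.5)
I would mirror the proof of Theorem \ref{thm:kernel_dim}, replacing the two-dimensional pseudo-area by a $d$-dimensional pseudo-volume. Let $V \in \ker M$. The first step is to establish the conservation law
\[ v_1 \prod_{m=1}^{d-1} v_{a^{(m)}} = \sum_{i=1}^n v_{x_i}^d . \]
To obtain it, expand the left-hand side using the global-dimension rows as $\prod_{m=1}^{d} \bigl(\sum_j v_{y^{(m)}_j}\bigr)$, which is a sum over all cells $(j_1,\dots,j_d)$ of the extended grid. Each grid cell lies in exactly one hypercube $i$, and the cells of hypercube $i$ form the product $Y_i^{(1)}\times\dots\times Y_i^{(d)}$. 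Regrouping the sum hypercube by hypercube and applying the $d$ incidence rows of cube $i$ turns each group into $\prod_m v_{x_i} = v_{x_i}^d$. This is the same formal manipulation as in Theorem \ref{thm:rationality}: it only uses the additive relations, together with the purely geometric fact that the cells partition into the cubes' boxes.

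\textbf{The main obstacle} is that when $d$ is odd, $v_{x_i}^d$ is not nonnegative, so $v_1=0$ no longer forces $v_{x_i}=0$ by positivity. I would first try to circumvent this by restricting to two axes. For a fixed pair of axes $(m,d)$, repeat the computation over the face-pairing: sum over cells but weight only two coordinates. Concretely, use the multilinear identity with the vector $v$ in two slots and the actual geometric lengths $y^{(m')}$ (all positive) in the remaining $d-2$ slots. This gives
\[ v_1\, v_{a^{(m)}} \prod_{m'\neq m,d} a^{(m')} = \sum_i v_{x_i}^2\, x_i^{d-2}, \]
because for cube $i$ the fixed positive slots contribute $\prod_{m'} \sum_{Y_i^{(m')}} y = x_i^{d-2}$. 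This is legitimate since the cell decomposition argument works for any choice of values per axis, as long as each cube's per-axis relation holds for the values used. The geometric $y$'s and the kernel vector's entries both satisfy these relations. With $v_1=0$ the left side vanishes and all $x_i>0$, so every $v_{x_i}=0$.

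Next, I would show that all segment variables vanish, exactly as before and axis by axis. If some $v_{y^{(m)}_i}\neq 0$ with $i$ minimal, take a cube whose upper face along axis $m$ lies on the hyperplane ending segment $y^{(m)}_i$. Its span along axis $m$ is a contiguous run ending at $i$, so its incidence row gives $v_{x_j}=v_{y^{(m)}_i}\neq0$, a contradiction. The global rows then give $v_{a^{(m)}}=0$, so $V=0$. Finally, as in Theorem \ref{thm:kernel_dim}, two independent kernel vectors could be combined to kill the first coordinate. Since the geometric vector lies in the kernel with first coordinate $1$, this yields $\dim\ker M=1$, and the tiling is determined up to scaling.
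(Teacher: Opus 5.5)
Your proof is correct, but it gets the key positivity step differently from the paper. The paper intersects the tiling with 2-planes parallel to axes $m$ and $d$. In each planar section it applies the pseudo-area identity of Theorem \ref{thm:kernel_dim}, giving $v_1 v_{a^{(m)}}=\sum v_{x_i}^2$ over the cubes meeting that section. It then uses the arbitrariness of the plane to get $v_{x_i}=0$ for every cube, and dismisses the segment variables by ``applying this pairwise planar argument to all axes''.

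Your mixed identity $v_1 v_{a^{(m)}}\prod_{m'\neq m,d}a^{(m')}=\sum_i v_{x_i}^2 x_i^{d-2}$ is exactly the paper's family of slice identities integrated over the $d-2$ transverse coordinates: cube $i$ meets a set of slices of measure $x_i^{d-2}$. You package this as a single algebraic equation.

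This buys you several things:
\begin{itemize}
\item One identity, for a single pair of axes, kills all $v_{x_i}$ at once.
\item You never need to argue that a planar section is a genuine two-dimensional tiling whose relations are rows of $M$. A section's own extended grid can be coarser than the trace of the $d$-dimensional grid, so the paper is really reusing the \emph{argument} of Theorem \ref{thm:kernel_dim}, not its statement.
\item You make explicit why the naive pseudo-volume $\sum_i v_{x_i}^d$ fails for odd $d$, which the paper's slicing silently sidesteps.
\item You carry out the minimal-index argument for the segment variables directly in dimension $d$, where the paper only sketches it.
\end{itemize}

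The paper's version is more visual and reuses the planar case more directly. Your opening $d$-th power conservation law is never used and can be dropped.
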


\begin{proof}
    As in the proof of Theorem \ref{thm:kernel_dim}, let $V \in \ker M$. Suppose its first coordinate, corresponding to the global scaled length "$1$", is equal to zero. We want to show that $V$ is the zero vector.
    
    Choose any axis $m \in \{1, \dots, d-1\}$ and pair it with the $d$-th axis. If we consider the intersection of the $d$-dimensional tiling with any plane parallel to the one spanned by the $m$-axis and the $d$-axis, we obtain a $2$-dimensional square tiling. The additive linear equations satisfied by the variables $x_i$, $y^{(m)}_j$, and $y^{(d)}_k$ in this plane are entirely contained within the rows of $M$. Since the global scaled length in $V$ is zero, applying the algebraic deduction from Theorem \ref{thm:kernel_dim} and because this plane is arbitrary guarantees that $v_{x_i} = 0$ for all $i$, and that all associated segment variables for these two axes are zero. Applying this pairwise planar argument to all axes ensures that all segment variables in $V$ are zero.
    
    Thus, $V = 0$, and we conclude that $\dim \ker M = 1$.
\end{proof}

\begin{theorem}\label{thm:den_bound_d}
    For any hypercube tiling of order $n$ in dimension $d$, the least common multiple $D$ of the denominators of the hypercubes' sizes is bounded by $d^n$.
\end{theorem}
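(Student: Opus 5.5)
We mirror the two-dimensional argument of Theorem \ref{thm:den_bound}, now using Theorem \ref{thm:kernel_dim_d}. The variable vector is
\[ X = (1, a^{(1)}, \dots, a^{(d-1)}, x_1, \dots, x_n, y^{(1)}_1, \dots, y^{(d)}_{k_d})^T, \]
with $N = d + n + K$ entries, where $K = \sum_m k_m$. The matrix $M$ has $d + dn$ rows and, by Theorem \ref{thm:kernel_dim_d}, rank $N-1$. We extract $N-1$ linearly independent rows to form $M'$. As in Section 3, the cofactor vector $u_j = (-1)^j\det(M'_j)$ spans $\ker M'$. Since $D X$ is primitive, we get $D = D\cdot 1 \le |\det(M'_1)|$, so it suffices to show $|\det(M'_1)| \le d^n$.

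\textbf{Structure of $M'$.} As before, no column of $M'$ can vanish, otherwise a cofactor would vanish, contradicting positivity of $X$. Hence the $d$ global rows are kept, since each is the only nonzero entry in the column of $1$ or of some $a^{(m)}$. For each $i$, at least one of the $d$ rows $R_{m,i}$ is kept. Let $r_i \in \{1,\dots,d\}$ be the number of kept rows among $R_{1,i},\dots,R_{d,i}$. In $M'_1$, the column of $x_i$ then has exactly $r_i$ nonzero entries, all equal to $-1$.

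\textbf{Expansion.} We expand $\det(M'_1)$ by multilinearity in the $n$ columns $x_1,\dots,x_n$, choosing one nonzero entry per column. This gives $\prod_i r_i \le d^n$ terms. In each term, the columns $a^{(1)},\dots,a^{(d-1)}$ and the chosen $x_i$ each contain a single $\pm1$, so successive Laplace expansion reduces the term to $\pm\det(S)$. Here $S$ is a square submatrix of the block matrix formed by the remaining rows restricted to the segment columns. Every such row has its nonzero entries in the columns of a single axis $m$, and these entries are $1$s in a contiguous interval of $y^{(m)}_1,\dots,y^{(m)}_{k_m}$. This holds both for incidence rows and for the global row of axis $m$.

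\textbf{Total unimodularity.} The segment-column matrix is therefore block diagonal across axes, with each block an interval matrix. Hence it is totally unimodular by Ghouila-Houri \cite{GhouilaHouri1962}, since total unimodularity is preserved under direct sums. It follows that $\det(S) \in \{-1,0,1\}$, and the triangle inequality gives $D \le \prod_i r_i \le d^n$.

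\textbf{Main obstacle and sharpening.} The step most at risk is the row bookkeeping: checking that the retained rows really give at least one entry per $x_i$-column and that the residual matrix is square. This is automatic from the construction of $M'$ but should be verified. One could sharpen the bound using the count of removed rows, which is $(d+dn)-(N-1) = dn - n - K + 1$. This bounds $\sum_i (d - r_i)$, so $\prod_i r_i$ can be maximized under this sum constraint. That would be the analogue of $2^{k+l-1}$, but the stated $d^n$ requires only the crude estimate $r_i \le d$.
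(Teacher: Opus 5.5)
Your proof is correct and follows the paper's route: a maximal-rank row selection $M'$ with its cofactor vector, the bound $D \le |\det(M'_1)|$, a multilinear expansion over the $x_i$-columns into at most $d^n$ terms, and total unimodularity of the residual interval matrix. Your bookkeeping with $r_i$ (giving $\prod_i r_i \le d^n$) and your explicit check that each remaining row has consecutive ones within a single axis are more careful than the paper's one-line proof, but the argument is the same.
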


\begin{proof}
    The proof is identical to the one of Theorem \ref{thm:den_bound}, relying on the total unimodularity of the remaining interval matrices \cite{GhouilaHouri1962}. Because each of the $n$ columns corresponding to the variables $x_i$ now contains exactly $d$ non-zero entries (one for each axis), the standard Laplace expansion splits the determinant into a sum of at most $d^n$ determinants which yields $D \leq d^n$.
\end{proof}

While the $d^n$ bound provides a self-contained and direct estimate, tighter asymptotic bounds have been recently established using more advanced techniques, such as planar multigraph complexity in two dimensions \cite{Fomin2021} and Diophantine approximations for higher dimensions \cite{Liu2023}.  

Our approach mainly offers an alternative, algebraic perspective. Because electrical network models rely on planar graphs, they do not easily generalize beyond two dimensions. In contrast, the properties of interval matrices \cite{GhouilaHouri1962} remain valid in $d$-dimensional space. We hope this straightforward linear encoding of geometric constraints will prove to be a convenient tool for studying higher-dimensional tilings.

\section*{Acknowledgements}

I would like to thank Romain Tessera for teaching the research course "Modal - Algèbre et géométrie - Pavages du plan" at École Polytechnique, during which this project was initiated. I would also like to thank Wassel Ben Yahia for introducing the topic and for our discussions exploring potential matrix representations of tilings.


\begin{thebibliography}{9}

\bibitem{Dehn1903} 
Max Dehn. 
\textit{Über die Zerlegung von Rechtecken in Rechtecke} (Sur la décomposition des rectangles en rectangles). 
Mathematische Annalen, vol. 57, pp. 314--332, 1903.

\bibitem{AignerZiegler}
Martin Aigner and Günter M. Ziegler.
\textit{Proofs from THE BOOK}.
Springer, Berlin, Heidelberg, 6th edition, 2018.

\bibitem{BSST1940} 
R. L. Brooks, C. A. B. Smith, A. H. Stone, and W. T. Tutte. 
\textit{The Dissection of Rectangles into Squares}. 
Duke Mathematical Journal, vol. 7, pp. 312--340, 1940.

\bibitem{GhouilaHouri1962}
Alain Ghouila-Houri.
\textit{Caractérisation des matrices totalement unimodulaires}.
Comptes Rendus Hebdomadaires des Séances de l'Académie des Sciences, Paris, vol. 254, pp. 1192--1194, 1962.

\bibitem{Kenyon1996}
Richard Kenyon.
\textit{Tiling a Rectangle with the Fewest Squares}.
Journal of Combinatorial Theory, Series A, vol. 76, no. 2, pp. 272--291, 1996.

\bibitem{Fomin2021}
D. Fomin.
\textit{Elementary proof for the bounds of the complexity of a planar multigraph and the size of a prime rectangular squaring}.
arXiv preprint arXiv:2103.10523, 2021.

\bibitem{Liu2023}
Tamás Keleti, Stephen Lacina, Changshuo Liu, Mengzhen Liu, and José Ramón Tuirán Rangel.
\textit{Tiling of rectangles with squares and related problems via Diophantine approximation}.
Discrete Mathematics, vol. 346, no. 9, article 113442, 2023. 
\url{https://doi.org/10.1016/j.disc.2023.113442}


\end{thebibliography}
\end{document}